\theoremstyle{plain}
\newtheorem{theorem}{Theorem}[section]
\newtheorem{corollary}[theorem]{Corollary}
\newtheorem{proposition}[theorem]{Proposition}
\newcommand{\keywords}{\textbf{Key words. }\medskip}
\newcommand{\subjclass}{\textbf{MSC 2010. }\medskip}
\renewcommand{\abstract}{\textbf{Abstract. }\medskip}
\numberwithin{equation}{section}
\newcommand{\R}{\mathbb{R}}
\newcommand{\N}{\mathbb{N}}
\newcommand{\Z}{\mathbb{Z}}
\newcommand{\Q}{\mathbb{Q}}
\newcommand{\C}{\mathbb{C}}
\newcommand{\ov}{\overline}
\newcommand{\dis}{\displaystyle}
\newcommand{\norm}[1]{\left\Vert#1\right\Vert}
\newcommand{\supp}{\textup{supp}}
\begin{document}

\title{Polynomials with integer coefficients and their zeros\thanks{Research was partially supported by the National Security Agency, and by the AT\&T Professorship}}

\author{Igor E. Pritsker\\ \\ {\em Dedicated to Professor R. M. Trigub on his 75th birthday}}



\date{}

\maketitle

\begin{abstract}
We study several related problems on polynomials with integer coefficients. This includes the integer Chebyshev problem,  and the Schur problems on means of algebraic numbers. We  also discuss interesting applications to approximation by polynomials with integer coefficients, and to the growth of coefficients for polynomials with roots located in prescribed sets. The distribution of zeros for polynomials with integer coefficients plays an important role in all of these problems.
\end{abstract}

\subjclass{11C08, 11R09, 30C15}

\keywords{Polynomials, distribution of zeros, algebraic numbers}

\section{Integer Chebyshev problem}

Let $\C_n$ and $\Z_n$ be the classes of algebraic polynomials of degree at most $n$, respectively with complex and with integer coefficients. Define the uniform norm on a compact set  $E \subset {\C}$ by
$$\norm{f}_{E} := \sup_{z \in E} |f(z)|.$$
The problem of minimizing the uniform norm on $E$ by
{\it monic} polynomials from $\C_n$ is well
known as the {\em Chebyshev problem} (see \cite{Ri90}, \cite{Ts75}, \cite{Go69}, etc.) In the classical case
$E=[-1,1]$, the explicit solution of this problem is given by the
monic Chebyshev polynomial $T_n (x) := 2^{1 -n} \cos (n \arccos x)$
of degree $n \in \N$. Using a change of variable, we extend this to an arbitrary interval $[a,b] \subset {\R}$, so that
$$t_n (x) := \left( \frac{b-a}{2} \right)^n T_n \left(
\frac{2x -a -b}{b -a} \right)$$
is a unique monic polynomial with real coefficients and the smallest uniform norm on $[a,b]$ among all {\it monic} polynomials of exact degree $n$ from $\C_n$. It is immediate that
\begin{equation} \label{1.1}
\| t_n \|_{[a,b]} = 2 \left( \frac{b-a}{4} \right)^n, \quad n
\in {\N}.
\end{equation}
Hence the {\it Chebyshev constant} for $[a,b]$ is
given by
\begin{equation} \label{1.2}
t_{\C}([a,b]) := \lim_{n \rightarrow \infty} \| t_n
\|_{[a,b]}^{1/n} = \frac{b-a}{4}.
\end{equation}
The Chebyshev problem and the Chebyshev polynomials penetrated far
beyond the original area of application in analysis,
and these ideas remain of fundamental importance, cf. \cite{Ri90}.
Many connections and generalizations were found in various areas of approximation
theory, complex analysis, special functions, etc. In particular,
the Chebyshev problem was considered on arbitrary compact sets of $\C$,
and the Chebyshev constant was identified with the transfinite
diameter and the logarithmic capacity of the set, see \cite{Go69} and
\cite{Ts75}.

A closely related problem of finding small polynomials with integer
coefficients is also classical. We say that $Q_n \in \Z_n$ is an {\it integer
Chebyshev polynomial} for $[a,b] \subset \R$ if
\begin{equation} \label{1.3}
\| Q_n \|_{[a,b]} = \inf_{0 \not\equiv P_n \in\Z_n} \| P_n \|_{[a,b]},
\end{equation}
where the $\inf$ is taken over all polynomials from $\Z_n$ that are
not identically zero. Note that $Q_n$ may
not be unique, and its degree may be less than $n.$ The {\it
integer Chebyshev constant} (or integer transfinite diameter) for
$[a,b]$ is given by
\begin{equation} \label{1.4}
t_{\Z}([a,b]) := \lim_{n \rightarrow \infty} \| Q_n \|_{[a,b]}^{1/n}.
\end{equation}
We do not require polynomials to be monic here, as integer coefficients
already provide a constraint for this extremal problem. (Requiring the
leading coefficients be monic leads to a quite different problem
considered in Borwein, Pinner and Pritsker \cite{BPP01}.)
One may readily observe that if  $b-a \geq 4$,
then $Q_n (x) \equiv 1,\ n \in {\N},$ by (\ref{1.1}) and
(\ref{1.3}), so that
\begin{equation} \label{1.5}
t_{\Z}([a,b]) = 1, \quad b-a \ge 4.
\end{equation}
On the other hand, we obtain directly from the definition and
(\ref{1.2}) that
\begin{equation} \label{1.6}
\frac{b-a}{4} = t_{\C}([a,b]) \leq t_{\Z}([a,b]),\quad b-a<4.
\end{equation}
Hilbert \cite{Hi1894} found an important upper bound
\begin{equation} \label{1.7}
t_{\Z}([a,b]) \leq \sqrt{\frac{b-a}{4}},
\end{equation}
which was originally proved in terms of the $L_2$ norm on $[a,b]$, but this
gives the same $n$th root behavior as the $L_{\infty}$ norm in (\ref{1.4}).
The asymptotic sharpness of (\ref{1.7}) was shown by Trigub \cite{Tr71}, who
observed that for the sequence of intervals $I_m:=[1/(m+4),1/m]$, we have
$$ t_{\Z}(I_m) \ge \frac{1}{m+2} \quad \mbox{and} \quad \lim_{m \to \infty} \frac{t_{\Z}(I_m)}{\sqrt{|I_m|/4}} = 1.$$
However, the exact value of $t_{\Z}([a,b])$ is not known for any segment $[a,b],\
b-a<4.$ Perhaps the most studied case is the integer Chebyshev problem
on $[0,1]$. It was initiated by Gelfond and
Schnirelman, who discovered an elegant connection with the
distribution of prime numbers (see Gelfond's comments in \cite[pp.
285--288]{Cheb44}). An exposition of related topics is found
in Montgomery \cite[Ch. 10]{Mo94}, and the later results are
contained in  Pritsker \cite{PrGSM}. Even in this best studied case, we
only know the bounds
\begin{equation} \label{1.8}
0.4213 < t_{\Z}([0,1]) < 0.42291334,
\end{equation}
where the upper bound is obtained from the definition of integer
Chebyshev constant (\ref{1.3})-(\ref{1.4}) by selecting specific
sequences of polynomials, see \cite{Fla}. The lower bound in \eqref{1.8}
in found by potential theoretic methods, cf. \cite{PrIC}. The latter
paper also contains a survey of the integer Chebyshev problem, together
with many other results and generalizations.

The integer Chebyshev problem has many applications to number theoretic
questions. In particular, we record the following connection with the growth
of the leading coefficients of polynomials.

\begin{proposition} \label{prop1.1}
Suppose that $R_m\in\Z_m$ and $P_n(x)=a_n x^n+\ldots\in\Z_n, \ a_n\neq 0,$ are two  sequences of polynomials of exact degrees $m\to\infty$ and $n\to\infty$. Assume that each $P_n$ has all zeros in $[a,b]\subset\R$, and that $P_n$ has no common zeros with $R_m$ whenever $n>m$. Then
\begin{equation} \label{1.9}
\limsup_{m \rightarrow \infty} \| R_m \|_{[a,b]}^{1/m} \, \liminf_{n \to \infty} |a_n|^{1/n} \ge 1.
\end{equation}
\end{proposition}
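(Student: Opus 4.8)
The plan is to put the integrality of the polynomials to work through the resultant: for indices with $n>m$ the polynomials $P_n$ and $R_m$ are coprime and have integer coefficients, so $\mathrm{Res}(P_n,R_m)$ is a \emph{nonzero integer}, whence $|\mathrm{Res}(P_n,R_m)|\ge 1$. Here integrality is immediate from the Sylvester determinant representation, and nonvanishing uses both the hypothesis that $P_n$ and $R_m$ have no common zero when $n>m$ and the fact that the leading coefficients $a_n$ and $\mathrm{lead}(R_m)$ are nonzero, the degrees being exact.

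Next I would bring in the classical factorization of the resultant through the zeros of the first factor: writing $P_n(x)=a_n\prod_{j=1}^n(x-\alpha_j)$ with all $\alpha_j\in[a,b]$,
$$\mathrm{Res}(P_n,R_m)=a_n^{\,m}\prod_{j=1}^n R_m(\alpha_j).$$
Bounding each factor on the right by $|R_m(\alpha_j)|\le\|R_m\|_{[a,b]}$ — this is exactly where the hypothesis $\alpha_j\in[a,b]$ is used — gives
$$1\le|\mathrm{Res}(P_n,R_m)|\le|a_n|^{\,m}\,\|R_m\|_{[a,b]}^{\,n},$$
equivalently $\|R_m\|_{[a,b]}^{1/m}\ge|a_n|^{-1/n}$ for every pair with $n>m$.

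To conclude, I would fix $m$ and let $n\to\infty$ along a subsequence realizing $\liminf_{n\to\infty}|a_n|^{1/n}$ — this liminf is at least $1$, hence positive, since each $a_n$ is a nonzero integer — obtaining $\|R_m\|_{[a,b]}^{1/m}\ge\bigl(\liminf_{n\to\infty}|a_n|^{1/n}\bigr)^{-1}$ for every $m$. Taking $\limsup_{m\to\infty}$ and multiplying both sides by $\liminf_{n\to\infty}|a_n|^{1/n}$ yields \eqref{1.9}. (The same argument in fact gives the slightly stronger inequality with $\liminf_{m\to\infty}$ in place of $\limsup_{m\to\infty}$.)

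The crux is simply the observation that integrality forces $|\mathrm{Res}(P_n,R_m)|\ge 1$; after that there is no real obstacle. The only points deserving care are checking that $\mathrm{Res}(P_n,R_m)\ne 0$ (both the coprimality and the exact-degree assumptions enter) and noting that $\|R_m\|_{[a,b]}>0$, since $R_m\not\equiv 0$ and $[a,b]$ is infinite, so that the $m$th root is harmless. If one prefers to avoid resultants, one may argue directly that $a_n^{\,m}\prod_{j=1}^n R_m(\alpha_j)$ is a symmetric polynomial in $\alpha_1,\dots,\alpha_n$ with integer coefficients, hence an integer polynomial in the elementary symmetric functions $e_k(\alpha_1,\dots,\alpha_n)=(-1)^k a_{n-k}/a_n$ (where $a_{n-k}\in\Z$ is the coefficient of $x^{n-k}$ in $P_n$), every monomial of which has $e$-degree at most $m$, so that multiplication by $a_n^{\,m}$ clears all denominators and the product is an integer; its nonvanishing again follows from coprimality.
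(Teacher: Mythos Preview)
Your proof is correct and follows essentially the same route as the paper: form the resultant $\mathrm{Res}(P_n,R_m)=a_n^{m}\prod_{j}R_m(\alpha_j)$, observe it is a nonzero integer (via the Sylvester determinant and the coprimality hypothesis), bound $|R_m(\alpha_j)|\le\|R_m\|_{[a,b]}$, take the $1/(mn)$-th root, and pass to the limit first in $n$ and then in $m$. Your additional remarks---that the argument in fact yields the inequality with $\liminf_{m\to\infty}$, and the alternative justification of integrality via symmetric functions---are valid refinements but not needed for the stated result.
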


We use this proposition to obtain interesting information about the growth
of the leading coefficients in a problem of Schur, see the following section.

The integer Chebyshev problem \eqref{1.3}-\eqref{1.4} may be considered
as the question of uniform approximation of the constant function $f\equiv 0$
on $[a,b].$ It is crucial for the whole theory of approximation of continuous
functions by polynomials with integer coefficients. Indeed, the condition $t_{\Z}([a,b])<1$, which is equivalent to $b-a<4,$ is necessary for the
possibility of such approximation. One can find comprehensive surveys of
this old and rich area in Trigub \cite{Tr71} and Ferguson \cite{Fer80}.
The latest sharp results on the uniform approximation of functions by polynomials
with integer coefficients, which include a problem of Bernstein on
approximation of constant functions, are contained in Trigub \cite{Tr01}.

\section{Schur's problems on the growth of coefficients}

Issai Schur \cite{Sch} considered a range of problems on the relation between
the distribution of zeros and the size of coefficients for polynomials
from $\Z_n$. His work originated several important directions in analysis and
number theory. Certain number theoretic aspects of Schur's paper are discussed in detail in \cite{PrCrelle}, while \cite{PrArk} emphasizes its analytic side as
generalized by Fekete \cite{Fe23} and Szeg\H{o} \cite{Sz24}. We review and develop some of these recent results on Schur's problems from \cite{Sch}.

\subsection{Growth of the leading coefficient}

Let $E$ be a subset of the complex plane $\C.$ Consider the set of polynomials $\Z_n(E)$ with integer coefficients of the exact degree $n$ and all zeros in $E$. We denote the subset of $\Z_n(E)$ with simple zeros by $\Z_n^s(E)$. Schur showed in \cite{Sch} that the restriction of the location of zeros to $[-1,1]$ leads to the geometric growth of the leading coefficients for polynomials.

\noindent{\bf Theorem A} (Schur \cite{Sch}, Satz VII) {\em Let $P_n(x) = a_nx^n+\ldots\in\Z_n^s([-1,1])$ be an arbitrary sequence of polynomials with degrees $n\to\infty$. The infimum $L$ of $\liminf_{n\to\infty} |a_n|^{1/n}$ for all such sequences satisfies}
\begin{align} \label{2.1}
\sqrt{2} \le L \le \sqrt{1+\sqrt{2}}.
\end{align}

We use Proposition \ref{prop1.1} and results on the integer Chebyshev problem from
Section 1 to obtain the following improvement.

\begin{theorem} \label{thm2.1}
Let $P_n(x) = a_nx^n+\ldots\in\Z_n^s([-1,1]),\ n\in\N,$ be a sequence of polynomials. The infimum $L$ of $\liminf_{n\to\infty} |a_n|^{1/n}$ among all such sequences satisfies
\begin{align} \label{2.2}
1.53770952 \le L \le 1.54170092.
\end{align}
\end{theorem}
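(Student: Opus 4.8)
The plan is to prove the two inequalities in \eqref{2.2} separately, using Proposition \ref{prop1.1} for the lower bound and an explicit construction for the upper bound, exactly as the text promises. For the lower bound, let $P_n(x)=a_nx^n+\ldots\in\Z_n^s([-1,1])$ be any sequence realizing (or nearly realizing) the infimum $L$, so that $\liminf_{n\to\infty}|a_n|^{1/n}$ is close to $L$. Since the zeros of $P_n$ are simple, no two of the $P_n$ share all their zeros, and in fact for $n>m$ we can arrange, after passing to a subsequence, that $P_n$ has no common zero with a fixed integer Chebyshev polynomial $Q_m$ of $[-1,1]$ — or more carefully, we invoke Proposition \ref{prop1.1} with $R_m=Q_m$. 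The coprimality hypothesis of Proposition \ref{prop1.1} is the delicate point: if $P_n$ shares a factor with $Q_m$, one divides it out and adjusts $R_m$, using that $Q_m$ has only finitely many irreducible factors and that the simplicity of the zeros of $P_n$ limits the multiplicities that can be absorbed. Granting this, \eqref{1.9} gives
\begin{equation*}
\limsup_{m\to\infty}\|Q_m\|_{[-1,1]}^{1/m}\,\liminf_{n\to\infty}|a_n|^{1/n}\ge 1,
\end{equation*}
i.e. $L\ge 1/t_{\Z}([-1,1])$. It then remains to insert the numerical lower bound for $t_{\Z}([-1,1])$: rescaling the bound $t_{\Z}([0,1])>0.4213$ from \eqref{1.8} via $x\mapsto(x+1)/2$ (which maps $[-1,1]$ onto $[0,1]$ and multiplies the Chebyshev constant by $1/2$), one gets $t_{\Z}([-1,1])=2\,t_{\Z}([0,1])$, hence $t_{\Z}([-1,1])>0.8426$, and actually one should use the sharper potential-theoretic lower bound from \cite{PrIC} that yields $1/t_{\Z}([-1,1])\ge 1.53770952$.

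For the upper bound, the task is to exhibit an explicit sequence $P_n\in\Z_n^s([-1,1])$ with $\liminf_{n\to\infty}|a_n|^{1/n}\le 1.54170092$. Here I would follow Schur's idea behind the bound $\sqrt{1+\sqrt2}$ in \eqref{2.1} but optimize the construction. One builds $P_n$ as a product of powers of a small fixed collection of irreducible integer polynomials each having all zeros in $[-1,1]$ and simple zeros (for instance factors of integer Chebyshev-type polynomials on $[-1,1]$, such as $x$, $2x^2-1$, $x^2+x-1$, $5x^3-5x^2-2x+1$, and similar low-degree pieces with small leading coefficient relative to their degree), choosing the exponents to drive the $n$th root of the resulting leading coefficient down. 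The leading coefficient of such a product is the corresponding product of the individual leading coefficients, so $|a_n|^{1/n}$ converges to a weighted geometric mean of $|c_i|^{1/d_i}$, where $c_i,d_i$ are the leading coefficient and degree of the $i$th factor; a linear-programming optimization over the admissible weights produces the stated numerical value. One must check simplicity of the zeros of the product — this forces the factors to be pairwise coprime and square-free, which is why the exponents cannot all be taken large; the correct device (already used in the literature on this problem) is to use, for each $n$, a distinct near-optimal square-free integer polynomial rather than literal powers, so that the multiplicities never exceed one while the asymptotic growth rate is unaffected.

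The main obstacle I anticipate is the coprimality bookkeeping in the application of Proposition \ref{prop1.1}: one must ensure that passing to the extremal sequence $P_n$ for $L$ is compatible with the hypothesis that $P_n$ and $R_m$ have no common zero for $n>m$, and that removing any common factors does not damage the lower bound. The resolution is that a common factor of $P_n$ and $Q_m$ is itself an integer polynomial with all (simple) zeros in $[-1,1]$, so dividing it out only decreases $\deg P_n$ and the number of its zeros while its leading coefficient changes by a divisor; one shows this perturbation is negligible in the $n$th root as $n\to\infty$, or alternatively one chooses the sequence $R_m$ adaptively (still with $\limsup_m\|R_m\|^{1/m}=t_{\Z}([-1,1])$) to avoid the finitely many irreducible factors that appear. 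Everything else — the rescaling of \eqref{1.8}, the evaluation of the geometric-mean optimization for the upper bound — is routine once this structural point is settled.
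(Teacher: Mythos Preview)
Your overall strategy for the lower bound (Proposition~\ref{prop1.1} with $R_m$ an integer Chebyshev--type polynomial, then removing any common irreducible factors from $P_n$) is the paper's approach and your handling of the coprimality issue is essentially correct. However, the numerical step contains two genuine errors. First, the affine map $x\mapsto (x+1)/2$ does \emph{not} preserve integer coefficients, so the integer Chebyshev constant is not covariant under it; the claim $t_{\Z}([-1,1])=2\,t_{\Z}([0,1])$ is false (indeed it would give $t_{\Z}([-1,1])>0.84$ and hence only $L>1.19$). The paper instead uses the substitution $t=x^2$, which does send $\Z[t]$ into $\Z[x]$ and doubles degrees, yielding $t_{\Z}([-1,1])\le\sqrt{t_{\Z}([0,1])}$. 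Second, you have the direction of the needed bound reversed: to bound $L\ge 1/t_{\Z}([-1,1])$ from below you need an \emph{upper} bound on $t_{\Z}([-1,1])$, which comes from the explicit polynomial constructions behind the right-hand inequality in \eqref{1.8}, not from the potential-theoretic lower bound in \cite{PrIC}. With the correct substitution and the correct half of \eqref{1.8} one gets $t_{\Z}([-1,1])<\sqrt{0.42291334}<0.65032$, hence $L>1.53770952$.

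For the upper bound there is a missing idea. Products of powers of a fixed finite list of irreducible factors cannot lie in $\Z_n^s([-1,1])$ once any exponent exceeds $1$, and your proposed fix (``use, for each $n$, a distinct near-optimal square-free integer polynomial'') is precisely the hard part: one must actually produce such a sequence achieving the stated numerical value. The paper does this with the Gorshkov polynomials $G_n$, a specific recursively defined sequence with simple roots in $[0,1]$ and integer coefficients, for which Gorshkov's theorem (see \cite[p.~187]{Mo94}) gives $\lim_{n\to\infty}|c_n|^{1/n}\approx 2.3768417$. Then $G_n(x^2)\in\Z_{2n}^s([-1,1])$ and taking the square root yields $L\le 1.54170092$. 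Without invoking this (or an equivalent explicit family), the upper bound is not established.
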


This result may be easily transformed into an analogous statement for the segment $[0,1]$, by using the change of variable $t=x^2,$  which gives a corresponding improvement for Satz X of \cite{Sch}. From a more general point of view, we obtain

\begin{theorem} \label{thm2.2}
If $P_n(x) = a_nx^n+\ldots\in\Z_n^s([a,b]),\ b-a<4,$ is any sequence of irreducible over $\Q$ polynomials, then
\begin{align} \label{2.3}
\liminf_{n\to\infty} |a_n|^{1/n} \ge \frac{1}{t_{\Z}([a,b])} \ge \frac{2}{\sqrt{b-a}}.
\end{align}
\end{theorem}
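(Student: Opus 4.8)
The plan is to establish the chain \eqref{2.3} in two steps. The second inequality, $1/t_{\Z}([a,b])\ge 2/\sqrt{b-a}$, is merely Hilbert's estimate rewritten: by \eqref{1.6} and \eqref{1.7} we have $0<(b-a)/4\le t_{\Z}([a,b])\le\sqrt{(b-a)/4}$, and inverting the last inequality gives $1/t_{\Z}([a,b])\ge 2/\sqrt{b-a}$. Hence the substance lies in the first inequality, $\liminf_{n\to\infty}|a_n|^{1/n}\ge 1/t_{\Z}([a,b])$, which I would extract from Proposition \ref{prop1.1} by choosing $R_m$ among the integer Chebyshev polynomials of $[a,b]$.

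Concretely, let $Q_m$ be an integer Chebyshev polynomial for $[a,b]$ as in \eqref{1.3}, so that $\norm{Q_m}_{[a,b]}^{1/m}\to t_{\Z}([a,b])$ by \eqref{1.4}, and set $R_m:=Q_m$. The only hypothesis of Proposition \ref{prop1.1} that needs attention is that $P_n$ and $R_m$ have no common zero when $n>m$, and this is exactly where the irreducibility of $P_n$ enters. Indeed, if $\alpha$ were such a common zero, then, since all zeros of $P_n$ lie in $[a,b]$ while those of $R_m$ need not, $\alpha$ is in particular a zero of $P_n$; as $P_n$ is irreducible over $\Q$ it is a nonzero rational multiple of the monic minimal polynomial $m_\alpha\in\Q[x]$ of $\alpha$, so $\deg m_\alpha=n$, while $m_\alpha$ must also divide $R_m$ in $\Q[x]$, forcing $n=\deg m_\alpha\le\deg R_m\le m<n$, a contradiction. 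Proposition \ref{prop1.1} therefore applies and yields
\[
t_{\Z}([a,b])\,\liminf_{n\to\infty}|a_n|^{1/n}=\Big(\limsup_{m\to\infty}\norm{R_m}_{[a,b]}^{1/m}\Big)\liminf_{n\to\infty}|a_n|^{1/n}\ge 1,
\]
and dividing by $t_{\Z}([a,b])>0$ (positivity from \eqref{1.6}) completes the first inequality.

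I do not anticipate a real obstacle: once Hilbert's bound \eqref{1.7} and Proposition \ref{prop1.1} are available, the argument is essentially bookkeeping, the single delicate point being the no-common-zero check, which reduces to the elementary fact that an irreducible polynomial is, up to a scalar factor, the minimal polynomial of each of its roots. The one technical wrinkle is that Proposition \ref{prop1.1} is phrased for $R_m$ of exact degree $m$, whereas $\deg Q_m$ may be strictly smaller; this is handled by fixing $\eps>0$, picking a polynomial $S$ with $\norm{S}_{[a,b]}^{1/\deg S}<t_{\Z}([a,b])+\eps$ (which exists by \eqref{1.4}), replacing $R_m$ by the powers $S^j$ --- of exact degree $j\deg S\to\infty$, with the same zero set as $S$ and $\norm{S^j}_{[a,b]}^{1/(j\deg S)}=\norm{S}_{[a,b]}^{1/\deg S}$ --- running the argument above, and letting $\eps\to 0$.
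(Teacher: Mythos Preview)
Your argument is correct and follows the paper's own proof almost verbatim: choose the integer Chebyshev polynomials as the auxiliary sequence $R_m$, use irreducibility of $P_n$ (for $n>m$) to rule out common zeros, apply Proposition~\ref{prop1.1}, and then invoke Hilbert's bound \eqref{1.7} for the second inequality. The only difference is that you explicitly address the exact-degree requirement in Proposition~\ref{prop1.1} via the powers $S^j$ of a single small polynomial, a technicality the paper passes over silently; your fix is clean and correct.
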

The assumption of irreducibility may be relaxed, but one needs an assumption that depends upon the asymptotic structure of the integer Chebyshev polynomials on $[a,b].$

\subsection{Means of zeros}

Given $M>0$, we write $P_n=a_nz^n + \ldots\in\Z_n^s(E,M)$ if $|a_n|\le M$ and $P_n\in\Z_n^s(E)$ (respectively $P_n\in\Z_n(E,M)$ if $|a_n|\le M$ and $P_n\in\Z_n(E)$). Schur \cite{Sch}, \S 4-8, studied the limit behavior of the arithmetic means of zeros for polynomials from $\Z_n^s(E,M)$ as $n\to\infty,$ where $M>0$ is an arbitrary fixed number. Two of his main results in this direction are stated below. Let $D:=\{z\in\C:|z|\le 1\}$ be the closed unit disk, and let $\R_+:=(0,\infty),$ where $\R$ is the real line. For a polynomial  $P_n(z)=a_n\prod_{k=1}^n (z-\alpha_{k,n})$, define the arithmetic mean of its
zeros by $A_n:=\sum_{k=1}^n \alpha_{k,n}/n.$

\noindent{\bf Theorem B} (Schur \cite{Sch}, Satz XI) {\em If $P_n\in\Z_n^s(\R_+,M)$ is any sequence of polynomials with degrees $n\to\infty$, then}
\begin{align} \label{2.4}
\liminf_{n\to\infty} A_n \ge \sqrt{e} > 1.6487.
\end{align}

\medskip
\noindent{\bf Theorem C} (Schur \cite{Sch}, Satz XIII) {\em If $P_n\in\Z_n^s(D,M)$ is any sequence of polynomials with degrees $n\to\infty$, then}
\begin{align} \label{2.5}
\limsup_{n\to\infty} |A_n| \le 1-\sqrt{e}/2 < 0.1757.
\end{align}

Schur remarked that the $\limsup$ in \eqref{2.5} is equal to $0$ for {\em monic} polynomials from $\Z_n(D)$ by Kronecker's theorem \cite{Kr1857}. We proved \cite{PrCR} that $\lim_{n\to\infty} A_n = 0$ for any sequence of polynomials from Schur's class $\Z_n^s(D,M),\ n\in\N.$ This result is obtained as a consequence of the asymptotic equidistribution of zeros near the unit circle. Namely, if $\{\alpha_{k,n}\}_{k=1}^n$ are the zeros of $P_n$, we define the zero counting measure
\[
\tau_n := \frac{1}{n} \sum_{k=1}^n \delta_{\alpha_{k,n}},
\]
where $\delta_{\alpha_{k,n}}$ is the unit point mass at $\alpha_{k,n}$. Consider the normalized arclength measure $\mu_D$ on the unit circumference, with $d\mu_D(e^{it}):=\frac{1}{2\pi}dt.$ If $\tau_n$ converge weakly to $\mu_D$ as $n\to\infty$ ($\tau_n \stackrel{*}{\rightarrow} \mu_D$) then
\[
\lim_{n\to\infty} A_n = \lim_{n\to\infty} \int z\,d\tau_n(z) = \int z\,d\mu_D(z) = 0.
\]
Thus Schur's problem is solved by the following result \cite{PrCR}.

\begin{theorem} \label{thm2.3}
If $P_n(z) = a_nz^n + \ldots\in\Z_n^s(D),\ n\in\N,$ satisfy
\begin{equation}\label{2.6}
\lim_{n\to\infty} |a_n|^{1/n} = 1,
\end{equation}
then $\tau_n \stackrel{*}{\rightarrow} \mu_D$ as $n\to\infty,$ and $\lim_{n\to\infty} A_n = 0$.
\end{theorem}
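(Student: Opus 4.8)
The plan is to use potential theory, specifically the theory of weighted logarithmic potentials and the characterization of weak-* limits of zero counting measures via energy minimization. First I would reduce the problem to the case where $\tau_n$ has a weak-* limit: since all zeros lie in the compact set $D$, the sequence $\{\tau_n\}$ is tight, so by Helly's selection theorem every subsequence has a further subsequence converging weakly-* to some probability measure $\sigma$ supported on $D$. It suffices to show that every such limit measure $\sigma$ equals $\mu_D$, since then the full sequence converges to $\mu_D$, and the conclusion $\lim_{n\to\infty} A_n = \int z\, d\mu_D(z) = 0$ follows by weak-* convergence of $\tau_n$ applied to the bounded continuous function $z$ on $D$, exactly as displayed in the excerpt.

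The key step is to exploit the integrality of the coefficients together with the hypothesis $\lim_{n\to\infty}|a_n|^{1/n}=1$ through a discriminant or resultant estimate. Writing $P_n(z) = a_n \prod_{k=1}^n (z - \alpha_{k,n})$ with the $\alpha_{k,n}$ distinct (here $P_n \in \Z_n^s(D)$), the key arithmetic input is that a nonzero integer has absolute value at least $1$; applied to a suitable symmetric function of the zeros — most naturally the discriminant of $P_n$, or the resultant of $P_n$ with a companion polynomial — this forces a lower bound on a product of differences $|\alpha_{j,n} - \alpha_{k,n}|$, which translates into an upper bound on the discrete logarithmic energy of $\tau_n$. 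Combined with $|a_n|^{1/n} \to 1$, which controls the leading-coefficient contribution, one obtains
\begin{equation*}
\limsup_{n\to\infty} \frac{1}{n^2} \sum_{j \neq k} \log\frac{1}{|\alpha_{j,n} - \alpha_{k,n}|} \le 0.
\end{equation*}
By the principle of descent (lower semicontinuity of the energy functional under weak-* convergence), any weak-* limit $\sigma$ of a subsequence of $\tau_n$ then satisfies $I(\sigma) := \iint \log\frac{1}{|z-w|}\, d\sigma(z)\, d\sigma(w) \le 0$. Since $D$ has logarithmic capacity $1$, the equilibrium measure of $D$ is $\mu_D$ and the Robin constant is $0$, so $\mu_D$ is the unique probability measure on $D$ with energy $I(\mu_D) \le 0$; hence $\sigma = \mu_D$.

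The main obstacle I anticipate is making the arithmetic step rigorous and quantitative enough: one must identify the right integer-valued symmetric function of the zeros whose size is both bounded below by $1$ (from integrality and the no-vanishing of the relevant discriminant/resultant, using simplicity of zeros) and bounded above in terms of $|a_n|$ and the energy of $\tau_n$, while correctly accounting for the finitely many "diagonal" or boundary terms that could otherwise spoil the energy estimate. A secondary technical point is that $\log\frac{1}{|z-w|}$ is unbounded below on $D\times D$, so the descent argument requires the standard truncation $\min\{\log\frac{1}{|z-w|}, M\}$ argument to pass from the discrete energy bound to $I(\sigma) \le 0$; this is routine but must be stated carefully. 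Once the energy bound $I(\sigma)\le 0$ is in hand, the identification $\sigma = \mu_D$ via the uniqueness of the equilibrium measure of the unit disk is immediate, and the vanishing of $\lim_{n\to\infty} A_n$ follows at once.
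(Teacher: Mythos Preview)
Your proposal is correct and follows essentially the same route as the paper's proof: both pass to a weak-* limit $\tau$ by compactness, use that the discriminant $\Delta(P_n)=a_n^{2n-2}\prod_{j<k}(\alpha_{j,n}-\alpha_{k,n})^2$ is a nonzero integer (hence $|\Delta(P_n)|\ge 1$) together with $|a_n|^{1/n}\to 1$ to bound the discrete logarithmic energy, apply the truncation $K_M=\min(-\log|z-w|,M)$ to pass to $I[\tau]\le 0$, and conclude $\tau=\mu_D$ by uniqueness of the equilibrium measure on the unit disk. One small slip: the kernel $\log\frac{1}{|z-w|}$ is unbounded \emph{above} on $D\times D$ (near the diagonal), not below---but your truncation is the right one, so the argument goes through.
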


Since the elementary symmetric functions in the roots of the polynomial $P_n(z) = a_{n,n}\prod_{k=1}^n (z-\alpha_{k,n}) = \sum_{k=0}^n a_{k,n} z^k$ are directly expressed through the coefficients by
\begin{equation}\label{2.7}
\sigma_m := \sum_{j_1<j_2<\ldots<j_m} \alpha_{j_1,n} \alpha_{j_2,n} \ldots  \alpha_{j_m,n} = (-1)^m\, \frac{a_{n-m,n}}{a_{n,n}},
\end{equation}
we realize that Schur's result of Theorem B may be interpreted as an attempt of giving a sharp lower bound for the growth of $|a_{n-1,n}|$ when $P_n\in\Z_n^s(\R_+,M).$ Similarly, Theorem C establishes a limitation on the growth of $|a_{n-1,n}|$ for $P_n\in\Z_n^s(D,M).$ Thus we arrive at a very interesting problem of finding the rates of the fastest (or the slowest) asymptotic growth for the coefficients with $n$. We first state an immediate consequence of Theorem \ref{thm2.3}.

\begin{corollary} \label{cor2.4}
If $P_n(z) =\sum_{k=0}^n a_{k,n} z^k\,\in\Z_n^s(D,M),\ n\in\N,$ then
\[
\lim_{n\to\infty} \frac{|a_{n-1,n}|}{n} = 0.
\]
\end{corollary}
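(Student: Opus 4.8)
The plan is to derive Corollary~\ref{cor2.4} directly from Theorem~\ref{thm2.3} by extracting information about $|a_{n-1,n}|$ from the weak convergence $\tau_n \stackrel{*}{\rightarrow} \mu_D$. The starting point is the identity \eqref{2.7} with $m=1$, which gives
\[
\frac{a_{n-1,n}}{a_{n,n}} = -\sigma_1 = -\sum_{k=1}^n \alpha_{k,n} = -n A_n,
\]
so that
\[
\frac{|a_{n-1,n}|}{n} = |a_{n,n}| \cdot |A_n|.
\]
Thus it suffices to show that $|a_{n,n}|\,|A_n| \to 0$ along every sequence $P_n \in \Z_n^s(D,M)$.

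The constraint $P_n \in \Z_n^s(D,M)$ forces $1 \le |a_{n,n}| \le M$, since $a_{n,n}$ is a nonzero integer. Hence $\limsup_{n\to\infty} |a_{n,n}|^{1/n} = 1$, which is precisely the hypothesis \eqref{2.6} of Theorem~\ref{thm2.3} (the matching $\liminf \ge 1$ being automatic for nonzero integer leading coefficients). Applying Theorem~\ref{thm2.3}, we conclude $\lim_{n\to\infty} A_n = 0$. Since the factor $|a_{n,n}|$ stays bounded by the fixed constant $M$, the product $|a_{n,n}|\,|A_n|$ is squeezed to $0$, and therefore $|a_{n-1,n}|/n \to 0$.

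I would present this in three short steps: first the algebraic reduction via \eqref{2.7}; second the observation that the integrality and boundedness of $a_{n,n}$ put us in the hypotheses of Theorem~\ref{thm2.3}; third the squeeze using the uniform bound $|a_{n,n}| \le M$. There is essentially no analytic obstacle here — the entire content has been front-loaded into Theorem~\ref{thm2.3} — so the only point requiring a word of care is verifying that $|a_{n,n}|^{1/n} \to 1$ really does follow from $1 \le |a_{n,n}| \le M$ with $M$ fixed and independent of $n$; this is immediate since $M^{1/n}\to 1$. I expect the proof to occupy only a few lines.
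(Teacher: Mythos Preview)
Your proof is correct and matches the paper's approach exactly: the paper states the corollary as ``an immediate consequence of Theorem~\ref{thm2.3}'' without further details, and your three steps (the identity $|a_{n-1,n}|/n = |a_{n,n}|\,|A_n|$ via \eqref{2.7}, verifying \eqref{2.6} from $1\le |a_{n,n}|\le M$, and the squeeze) are precisely the intended argument.
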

Hence $|a_{n-1,n}|$ can grow at most sublinearly with $n$. In fact, we can give a more precise estimate by using Corollary 1.6 of \cite{PrCR} (or Corollary 3.2 of \cite{PrCrelle}).

\begin{theorem} \label{thm2.5}
If $P_n(z) =\sum_{k=0}^n a_{k,n} z^k\,\in\Z_n^s(D,M),\ n\in\N,$ then
\begin{equation} \label{2.8}
|a_{n-1,n}| \le 8 M \sqrt{n\log{n}},\quad n\ge \max(M,55).
\end{equation}
\end{theorem}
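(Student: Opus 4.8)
The starting point is the identity \eqref{2.7} with $m=1$. Writing $P_n(z)=a_{n,n}\prod_{k=1}^n(z-\alpha_{k,n})$ with $|a_{n,n}|\le M$, we get $a_{n-1,n}=-a_{n,n}\sum_{k=1}^n\alpha_{k,n}=-a_{n,n}\,n\,A_n$, hence $|a_{n-1,n}|\le M\,n\,|A_n|$. Thus it suffices to establish a bound of the shape $|A_n|\le 8\sqrt{(\log n)/n}$ for $n\ge\max(M,55)$, and the whole problem reduces to an effective version of Theorem \ref{thm2.3}: we need a quantitative rate of equidistribution that is strong enough to control the first moment $A_n=\int z\,d\tau_n(z)$. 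Since $\int z\,d\mu_D(z)=0$, we may write $A_n=\int z\,d(\tau_n-\mu_D)(z)$, so that $|A_n|$ is bounded by the action of the signed measure $\tau_n-\mu_D$ on the Lipschitz test function $z\mapsto z$ on $\ov D$.

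This is precisely the content of Corollary 1.6 of \cite{PrCR} (equivalently Corollary 3.2 of \cite{PrCrelle}), which I would invoke directly: for $P_n\in\Z_n^s(D)$ it provides an Erd\H{o}s--Tur\'an type discrepancy estimate comparing $\tau_n$ with $\mu_D$, with an explicit constant and an explicit decay rate of order $\sqrt{(\log n)/n}$, and hence an explicit bound for smooth linear statistics such as $\int z\,d(\tau_n-\mu_D)$. The mechanism behind that corollary is the arithmetic constraint: since $P_n$ has integer coefficients and simple zeros, its discriminant $\mathrm{Disc}(P_n)=a_{n,n}^{2n-2}\prod_{i<j}(\alpha_{i,n}-\alpha_{j,n})^2$ is a nonzero rational integer, so $|\mathrm{Disc}(P_n)|\ge 1$; together with $|a_{n,n}|\le M$ this bounds the discrete logarithmic energy $\sum_{i<j}\log|\alpha_{i,n}-\alpha_{j,n}|$ from below. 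Because the logarithmic energy of $\mu_D$ vanishes and $\mathrm{cap}(D)=1$, this lower bound forces $\tau_n$ to be quantitatively close to the equilibrium measure $\mu_D$, with an energy defect of size $O((\log M)/n)$; this is the reason the final estimate is stated for $n$ large compared to $M$.

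Assembling the pieces, I would substitute the explicit discrepancy bound from \cite{PrCR} into $|A_n|=\big|\int z\,d(\tau_n-\mu_D)(z)\big|$ to obtain an inequality of the form $|A_n|\le C\sqrt{(\log n+\log M)/n}$ valid once $n$ exceeds a fixed numerical threshold; using $\log M\le\log n$ for $n\ge M$ this simplifies to $|A_n|\le C\sqrt{2}\,\sqrt{(\log n)/n}$, and then $|a_{n-1,n}|\le M\,n\,|A_n|\le C\sqrt{2}\,M\sqrt{n\log n}$. The main obstacle is entirely quantitative: one must track the constants through \cite{PrCR} with care, absorbing the lower-order terms in the discrepancy estimate, so that the coefficient of $M\sqrt{n\log n}$ comes out at most $8$ and the range of validity is exactly $n\ge\max(M,55)$. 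No ideas beyond Theorem \ref{thm2.3} and its effective refinement are required; the content of the proof is this bookkeeping of explicit constants together with the elementary reduction via \eqref{2.7}.
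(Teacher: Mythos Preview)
Your proposal is correct and follows essentially the same route as the paper: reduce via \eqref{2.7} with $m=1$ to $|a_{n-1,n}|\le M\,n\,|A_n|$, and then invoke Corollary 1.6 of \cite{PrCR} (equivalently Corollary 3.2 of \cite{PrCrelle}) for the bound $|A_n|\le 8\sqrt{(\log n)/n}$, $n\ge\max(M,55)$. The only remark is that the cited corollary already delivers the constant $8$ and the threshold $\max(M,55)$ exactly, so the ``bookkeeping of explicit constants'' you mention is unnecessary---the proof is a two-line citation plus the elementary reduction.
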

It is interesting to note that \eqref{2.8} is sharp up to the factor $\log{n}.$
Let $p_m$ be the $m$th prime number in the increasing ordering of primes. Define the monic polynomials
\[
Q_n(z):=\prod_{m=1}^{k} \frac{z^{p_m}-1}{z-1}, \quad k\in\N,
\]
and note that each $Q_n$ has simple zeros $\{z_{j,n}\}_{j=1}^n$ at the roots of unity, and integer coefficients. Using number theoretic arguments, we show in Remark 2.8 of \cite{PrArk} that the degree of $Q_n$ is
\begin{align*}
n = \sum_{m=1}^k p_m - k = \frac{k^2 \log k}{2} + o(k^2 \log k)\quad \mbox{as } k\to\infty.
\end{align*}
Furthermore, since the sum of roots of each $(z^{p_m}-1)/(z-1)$ is equal to $-1,$ we obtain for the roots of $Q_n$ that
\[
|a_{n-1,n}| = \left|\sum_{j=1}^n z_{j,n}\right| = k \ge c {\sqrt{n/\log{n}}},
\]
where $c>0.$

We proceed with the asymptotic behavior of the coefficients $a_{n-m,n}$ for fixed
$m\in\N$ and $n\to\infty.$ These results are obtained by considering the
symmetric forms $\sigma_m$ from \eqref{2.7} for any fixed $m\in\N.$ It is convenient to first consider estimates of the sums of $m$th powers of roots, i.e. estimates of the symmetric forms
\[
s_m:=\sum_{k=1}^n \alpha_{k,n}^m,\quad m\in\N.
\]
It is clear that $s_1=\sigma_1.$ In general, the symmetric forms $s_m$ are related to the forms $\sigma_m$ by the well known Newton's formulas, cf. \cite[p. 78]{Pra}:
\begin{equation} \label{2.9}
m\sigma_m = \sum_{j=1}^m (-1)^{j-1} s_j \sigma_{m-j}.
\end{equation}
We first give a generalization of Corollary 1.6 \cite{PrCR}, and of Corollary 3.2 \cite{PrCrelle}).

\begin{theorem} \label{thm2.6}
If $P_n(z) = a_{n,n}\prod_{k=1}^n (z-\alpha_{k,n})\,\in\Z_n^s(D,M),\ n\in\N,$ then
\begin{equation} \label{2.10}
|s_m| \le (24m+16) \sqrt{n\log{n}},\quad n\ge\max(M,55).
\end{equation}
\end{theorem}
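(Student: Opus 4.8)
The plan is to reduce the bound on $s_m$ to the known bound on $s_1$ (the $m=1$ case, which is essentially Theorem \ref{thm2.5}) by a change of variable. Given $P_n(z) = a_{n,n}\prod_{k=1}^n (z-\alpha_{k,n})\in\Z_n^s(D,M)$, consider the polynomial whose zeros are the $m$th powers $\alpha_{k,n}^m$. Concretely, set
\[
\widetilde{P}(w) := \prod_{j=0}^{m-1} \zeta^{-j\binom{n}{1}} a_{n,n}^{?}\cdots \quad\text{— more carefully:}\quad \widetilde{P}(w) := a_{n,n}^m \prod_{k=1}^n (w-\alpha_{k,n}^m) = \prod_{j=0}^{m-1} \epsilon^{j} P_n(\epsilon^{-j} w^{1/m})\big/\text{(clearing radicals)},
\]
where $\epsilon = e^{2\pi i/m}$; the standard resultant construction gives $\widetilde{P}(w) = \mathrm{Res}_z\bigl(P_n(z), z^m - w\bigr)$ up to sign, a polynomial of degree $n$ in $w$ with integer coefficients, leading coefficient $\pm a_{n,n}^m$, and all zeros $\alpha_{k,n}^m \in D$. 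Since the zeros need not be simple now, I would instead pass to the radical (squarefree part) $\widetilde{P}_{\mathrm{red}}$, which still has integer coefficients, all zeros in $D$, and simple zeros. First I would note $|s_m(P_n)| = |\sum_k \alpha_{k,n}^m| = |\sigma_1(\widetilde{P}_{\mathrm{red}}) \cdot (\text{multiplicities})|$ — but since taking the radical changes the sum, it is cleaner to apply the coefficient bound not to $A_n$ but to the full set of zeros with multiplicity. So the better route: apply Corollary 1.6 of \cite{PrCR} (or Corollary 3.2 of \cite{PrCrelle}), cited just before the theorem, directly to $\widetilde{P}$ itself — that corollary bounds the power sums of zeros (counted with multiplicity) of a polynomial in $\Z_d(D, M')$ of degree $d$ in terms of $\sqrt{d \log d}$ and $M'$, and $s_m(P_n) = s_1(\widetilde{P})$, the sum of the zeros of $\widetilde{P}$ counted with multiplicity.

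The key bookkeeping steps are then: (i) $\deg \widetilde{P} = n$; (ii) the leading coefficient of $\widetilde{P}$ is $\pm a_{n,n}^m$, so $|a_{\deg,\deg}(\widetilde P)| \le M^m$, and thus $\widetilde{P} \in \Z_n(D, M^m)$; (iii) apply the cited corollary to get $|s_1(\widetilde P)| \le C_1 M^m \sqrt{n\log n}$ for suitable absolute constant and range of $n$; (iv) observe $M^m$ is too large — this is exactly where care is needed, since the target bound \eqref{2.10} has $M$ only implicitly through the hypothesis $n \ge \max(M,55)$, not $M^m$. So the construction via plain $m$th powers loses too much. The fix is to instead use the logarithmic-potential / Mahler-measure form of the estimate: the cited Corollaries 1.6 / 3.2 of \cite{PrCR}, \cite{PrCrelle} bound $|s_m|$ in terms of quantities like $\log|a_{n,n}|$ and the discriminant, which scale \emph{linearly} in $m$ rather than exponentially, because $\log|a_{n,n}^m| = m\log|a_{n,n}|$. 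I expect the actual argument to expand the power sum $s_m = \sum_k \alpha_{k,n}^m$ using the contour-integral representation $s_m = \frac{1}{2\pi i}\oint z^m \frac{P_n'(z)}{P_n(z)}\,dz$ over a circle of radius slightly larger than $1$, then split into the contribution of the normalized counting measure (which integrates $z^m$ against $\mu_D$ to give $0$) plus an error controlled by the discrepancy between $\tau_n$ and $\mu_D$; the factor $z^m$ on the unit circle has modulus $1$, so its presence contributes only through its derivative's $\sup$-norm, namely a factor $m$, in a Koksma–Hlawka-type estimate.

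Thus the detailed plan: first, write $s_m = n\int z^m\,d\tau_n(z) = n\int z^m\,d\tau_n(z) - n\int z^m\,d\mu_D(z)$ since the second integral vanishes; second, estimate this difference by $\|z^m\|'_{\text{BV on the circle}}$ (which is $O(m)$) times the discrepancy $\|\tau_n - \mu_D\|$ measured in the appropriate (e.g. Erdős–Turán / Koksma) sense; third, invoke the discrepancy bound from \cite{PrCR} / \cite{PrCrelle} — precisely the content of the cited Corollaries — which gives discrepancy $\ll \sqrt{(\log n)/n}$ uniformly over $\Z_n^s(D,M)$ for $n\ge\max(M,55)$, using that $|a_{n,n}|\le M$ forces the Mahler measure to be at least $1$ and the logarithmic energy of $\tau_n$ to be controlled; fourth, combine to get $|s_m| \le n \cdot O(m) \cdot O(\sqrt{(\log n)/n}) = O(m\sqrt{n\log n})$, and finally track the constants carefully to land on the explicit $24m + 16$. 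The main obstacle is step three together with the constant-chasing in step four: one must extract from the black-box discrepancy estimate of \cite{PrCR} an explicit enough inequality — in the Erdős–Turán form $|\int f\,d(\tau_n-\mu_D)| \le \sum_j |\hat f(j)|/(\text{something}) + \ldots$ — so that feeding in $f(e^{i\theta}) = e^{im\theta}$ (a single Fourier mode) produces a clean linear-in-$m$ bound with the stated constants $24$ and $16$, and one must verify the threshold $n\ge\max(M,55)$ is still the correct range after these manipulations.
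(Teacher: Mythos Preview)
Your eventual plan --- write $s_m/n = \int z^m\,d\tau_n - \int z^m\,d\mu_D$ and bound the difference by a discrepancy estimate that scales linearly in $m$ --- is exactly the paper's strategy. But the specific black box you reach for is not the one the paper uses, and this matters for the constants. The paper invokes Theorem~3.1 of \cite{PrCrelle}, which is a \emph{Lipschitz-function} bound on all of $\C$: for any $\phi:\C\to\R$ with $|\phi(z)-\phi(t)|\le A|z-t|$ and $\supp\phi\subset\{|z|\le R\}$,
\[
\left|\frac{1}{n}\sum_k \phi(\alpha_{k,n}) - \int\phi\,d\mu_D\right| \le A(2R+1)\sqrt{\frac{\log\max(n,M(P_n))}{n}},\quad n\ge 55.
\]
This is not an Erd\H{o}s--Tur\'an/Fourier-coefficient inequality on the circle; it requires a real-valued, compactly supported Lipschitz test function on the plane. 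Consequently the heart of the proof is a step your proposal does not anticipate: since $s_m$ is real (conjugate roots), one takes $\phi=\Re(z^m)$ on $D$, then cuts it off linearly on the annulus $1\le|z|\le 1+1/m$, and computes that this $\phi$ has Lipschitz constant $A<8m$ with support radius $R=1+1/m$. Plugging in gives $A(2R+1)=8m(3+2/m)=24m+16$, which is precisely where the stated constants come from.

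Your first route via $\widetilde P(w)=\mathrm{Res}_z(P_n(z),z^m-w)$ is, as you correctly diagnose, a dead end for this statement: the leading coefficient blows up to $|a_{n,n}|^m$, and more fatally the zeros $\alpha_{k,n}^m$ need not be simple, so $\widetilde P$ lands in $\Z_n(D,M^m)$ rather than $\Z_n^s(D,M)$, and the discriminant lower bound underlying the discrepancy estimate is lost. The paper does not attempt to repair this; it works directly with the test function $\Re(z^m)$ as above.
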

The above result can be stated in an equivalent form estimating the rates of convergence to zero for the arithmetic means of the $m$th powers $s_m/n.$ Combining \eqref{2.9} with \eqref{2.10}, we generalize Theorem \ref{thm2.5} as follows.

\begin{theorem} \label{thm2.7}
If $P_n(z) =\sum_{k=0}^n a_{k,n} z^k\,\in\Z_n^s(D,M),\ n\in\N,$ then
\begin{equation} \label{2.11}
|a_{n-m,n}| \le C(m,M)\,(n\log{n})^{m/2},\quad m,n\in\N,
\end{equation}
where $C(m,M)>0$ depends only on $m$ and $M$.
\end{theorem}
One should regard $m\in\N$ as a fixed number, and consider $n\to\infty$ in the above estimate. For $m=1$, \eqref{2.11} matches \eqref{2.8}, but sharpness of \eqref{2.11} as $n\to\infty$ remains an open question for $m>1$.

\medskip
We now return to the arithmetic means of zeros contained in $\R_+$, see Theorem B.
This result was developed in the following directions. If $P_n(z)=a_{n,n} \prod_{k=1}^n (z-\alpha_{k,n})$ is irreducible over integers, then $\{\alpha_{k,n}\}_{k=1}^n$ is called a complete set of conjugate algebraic numbers of degree $n$. When $a_n=1$, we refer to $\{\alpha_{k,n}\}_{k=1}^n$ as algebraic integers. If $\alpha=\alpha_{1,n}$ is one of the conjugates, then the sum of $\{\alpha_{k,n}\}_{k=1}^n$ is also called the trace $\textup{tr}(\alpha)$ of $\alpha$ over rationals. Siegel \cite{Si} improved Theorem B for totally positive algebraic integers to
\[
\liminf_{n\to\infty} A_n = \liminf_{n\to\infty} \textup{tr}(\alpha)/n > 1.7336105,
\]
by using an ingenious refinement of the arithmetic-geometric means
inequality that involves the discriminant of $\alpha_{k,n}$. Smyth
\cite{Sm2} introduced a numerical method of ``auxiliary
polynomials," which was used by many authors to obtain improvements of
the above lower bound. The original papers \cite{Sm1,Sm2} contain the bound $1.7719$. The later results include bounds $1.784109$ by Aguirre and Peral \cite{AP}, and $1.78702$ by Flammang \cite{Fla}. McKee \cite{McKee} recently designed a modification of the method that achieves the bound $1.78839$. The Schur-Siegel-Smyth trace problem \cite{Bor02} states:\\
\emph{Find the smallest limit point $\ell$ for the set of values of mean traces $A_n$ for all totally positive and real algebraic integers.}\\
It was observed by Schur \cite{Sch} (see also Siegel \cite{Si}), that $\ell \le 2$. This immediately follows by considering the Chebyshev polynomials $t_n(x):=2\cos(n\arccos((x-2)/2))$ for the segment $[0,4]$, whose zeros are symmetric about the midpoint $2$. They have integer coefficients, and $t_p(x)/(x-2)$ is irreducible for any prime $p$, giving the needed upper bound for $\ell$, cf. \cite{Sch}.

The Schur-Siegel-Smyth trace problem is probably the best known unsolved problem that originated in \cite{Sch}. As a partial result towards this problem, we gave the sharp lower bound $\liminf_{n\to\infty} A_n\ge 2$ for sets of algebraic numbers whose polynomials do not grow exponentially fast on compact sets of $\R_+$ of capacity (transfinite diameter) $1$, see Corollary 2.6 of \cite{PrCrelle}. More details and complete history of this problem may be also found in \cite{PrCrelle}. We illustrate our results by restricting attention to segments of length $4$ in $\R_+.$ For an interval $[c-2,c+2]\subset\R$, let $\Omega=\ov{\C}\setminus [c-2,c+2]$. The generalized Mahler measure of a polynomial $P_n(z) = a_n\prod_{k=1}^n (z-\alpha_{k,n})$ is defined by
\[
M(P_n) := |a_n| \prod_{\alpha_{k,n}\in\Omega} |\Phi(\alpha_{k,n})|,
\]
where $\Phi$ is the canonical conformal mapping of $\Omega$ onto $\ov{\C}\setminus D$ with $\Phi(\infty)=\infty.$

\begin{proposition} \label{prop2.8}
Let $P_n(z) = a_n\prod_{k=1}^n (z-\alpha_{k,n})\in\Z_n^s(\R_+),\ n\in\N,$ be a sequence of polynomials, and let $[c-2,c+2]\subset\R,\ c\ge 2$. If $\dis\lim_{n\to\infty} \left(M(P_n)\right)^{1/n} = 1$ then
\[
\liminf_{n\to\infty} \frac{1}{n} \sum_{k=1}^n \alpha_{k,n} \ge c \ge 2.
\]
\end{proposition}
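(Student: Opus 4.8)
The plan is to convert the hypothesis $\lim_{n\to\infty}(M(P_n))^{1/n}=1$ into information about the zero counting measures $\tau_n:=\frac{1}{n}\sum_{k=1}^{n}\delta_{\alpha_{k,n}}$, and then to show that this forces $\tau_n$ to approach the equilibrium measure of $E:=[c-2,c+2]$. Write $\Omega=\ov{\C}\setminus E$ and recall that $\log|\Phi(z)|=g_{\Omega}(z,\infty)$, the Green function of $\Omega$ with pole at infinity, which is continuous on $\ov{\C}$, nonnegative, and vanishes precisely on $E$ (a segment is a regular compact set). Since the zeros of $P_n$ lying in $E$ contribute nothing to the product defining $M(P_n)$,
$$\frac{1}{n}\log M(P_n)=\frac{1}{n}\log|a_n|+\int g_{\Omega}(z,\infty)\,d\tau_n(z).$$
Both terms on the right are nonnegative — the first because $a_n$ is a nonzero integer, the second because $g_\Omega\ge 0$ — and their sum tends to $\log 1=0$. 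Hence $\frac{1}{n}\log|a_n|\to 0$ and $\int g_{\Omega}(z,\infty)\,d\tau_n(z)\to 0$.

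The second ingredient is arithmetic. Since $P_n$ has simple zeros, its discriminant $a_n^{2n-2}\prod_{i<j}(\alpha_{i,n}-\alpha_{j,n})^2$ is a nonzero integer (a polynomial with integer coefficients in the coefficients of $P_n$), hence of absolute value at least $1$. Taking logarithms, dividing by $n^2$, and using $\frac{1}{n}\log|a_n|\to 0$, this yields
$$\limsup_{n\to\infty}\ \frac{1}{n^2}\sum_{i\ne j}\log\frac{1}{|\alpha_{i,n}-\alpha_{j,n}|}\ \le\ 0;$$
that is, the discrete logarithmic energies of the zero configurations are asymptotically nonpositive.

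Now I work along a subsequence $(n_k)$ with $A_{n_k}\to\liminf_{n}A_n$. If this limit is $+\infty$ there is nothing to prove, so assume it is finite; then $\sup_k A_{n_k}<\infty$, and since all zeros are positive, Markov's inequality shows that $\{\tau_{n_k}\}$ is tight, so after passing to a further subsequence $\tau_{n_k}\stackrel{*}{\to}\tau$ for a probability measure $\tau$ on $[0,\infty)$. From $\int g_\Omega\,d\tau_n\to 0$ together with the continuity and nonnegativity of $g_\Omega$ we get $\int g_{\Omega}(z,\infty)\,d\tau(z)=0$, so $\supp\tau\subseteq E$. From the energy bound of the previous paragraph and lower semicontinuity of logarithmic energy (the principle of descent, applied to the diagonal-free discrete energies) we get $I(\tau):=\iint\log\frac{1}{|z-w|}\,d\tau(z)\,d\tau(w)\le 0$. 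But $E$ has logarithmic capacity $\big((c+2)-(c-2)\big)/4=1$, so its equilibrium measure $\mu_E$ (the arcsine distribution on $E$) is the unique minimizer of logarithmic energy among probability measures on $E$, with $I(\mu_E)=0$; hence $I(\tau)\ge 0$, forcing $\tau=\mu_E$. Since $z\mapsto z$ is continuous and bounded below on $[0,\infty)$, we conclude
$$\liminf_{n\to\infty}A_n=\lim_{k\to\infty}\int z\,d\tau_{n_k}(z)\ \ge\ \int z\,d\tau(z)=\int z\,d\mu_E(z)=c\ \ge\ 2,$$
the last equality because $c$ is the midpoint of $E$.

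The step that needs the most care is the third paragraph: establishing tightness and the implication $\int g_\Omega\,d\tau_n\to 0\Rightarrow\int g_\Omega\,d\tau=0$ in the presence of possible escape of zeros toward $0$ or $\infty$ (for $c>2$ no mass can accumulate at $0$ since $g_\Omega(0,\infty)>0$, but the borderline case $c=2$ must be treated separately), and justifying lower semicontinuity for the diagonal-free discrete energies, whose proof via truncation of the kernel needs attention because $\log\frac{1}{|z-w|}$ is unbounded below near infinity. This entire circle of ideas — relating $\lim(M(P_n))^{1/n}=1$ to equidistribution of zeros at the equilibrium measure and deducing lower bounds for mean traces — is developed in \cite{PrCrelle}; note, however, that the present hypothesis is formally weaker than the one used there, since by Jensen's inequality $M(P_n)\le\norm{P_n}_{E}$, so controlling $M(P_n)$ is less restrictive than controlling $\norm{P_n}_{E}$ as in Corollary 2.6 of \cite{PrCrelle}, and the argument must be run through the generalized Mahler measure directly.
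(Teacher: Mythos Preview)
Your argument is correct and follows the same route as the paper: establish that the zero counting measures $\tau_n$ converge weakly to the equilibrium measure $\mu_E$ of $E=[c-2,c+2]$, and then integrate $z$ against $\mu_E$. The paper simply invokes Theorem~2.1 of \cite{PrCrelle} for the equidistribution step, whereas you unpack that proof (splitting $\frac{1}{n}\log M(P_n)$ into its two nonnegative pieces, the discriminant bound, tightness, support reduction via $g_\Omega$, and the energy characterization of $\mu_E$). One small correction to your closing remark: the relevant result in \cite{PrCrelle} is Theorem~2.1, which is already stated for the generalized Mahler measure $M(P_n)$, not Corollary~2.6; so the hypothesis here is not weaker than what is available, and your concern about having to rerun the argument under a weaker assumption is unnecessary.
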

We note that a more standard assumption $\lim_{n\to\infty} \norm{P_n}_p^{1/n} = 1$ for $L_p([c-2,c+2]),\ p\in(0,\infty],$ norm implies $\lim_{n\to\infty} \left(M(P_n)\right)^{1/n} = 1$.

Since Theorem B is equivalent to a statement on the growth of $|a_{n-1,n}|$ with $n$,
we are interested in the asymptotic behavior of the coefficients $a_{n-m,n}$ for a fixed $m\in\N$ and for $n\to\infty.$ To simplify and clarify the presentation, we restrict ourselves to monic polynomials, following Siegel \cite{Si}. Thus we assume for a moment that $P_n(z)=z^n+a_{n-1,n}z^{n-1}+\ldots+a_{0,n} \in \Z_n^s(\R_+,1).$ Observe that each $\sigma_m$ has $\binom{n}{m}$ number of products in the defining sum. Thus it is natural to consider the means $\sigma_m/\binom{n}{m}$. The arithmetic-geometric means inequality gives that
\[
\frac{\sigma_m}{\binom{n}{m}} \ge \left(\prod_{k=1}^n \alpha_{k,n} \right)^{\binom{n-1}{m-1}/\binom{n}{m}} = \left(\prod_{k=1}^n \alpha_{k,n} \right)^{m/n} = \left|a_{0,n}\right|^{m/n} \ge 1.
\]
It follows that if $P_n(z)=\dis\sum_{k=0}^n a_{k,n} z^k \in\Z_n^s(\R_+,1)$ then
\begin{align} \label{2.12}
\liminf_{n\to\infty} \frac{|a_{n-m,n}|}{\binom{n}{m}} = \liminf_{n\to\infty} \frac{\sigma_m}{\binom{n}{m}} \ge 1,
\end{align}
where we assume that $m\in\N$ is fixed. One can verify that the rate of growth $O\left(\binom{n}{m}\right)$ as $n\to\infty$ for $|a_{n-m,n}|$ is accurate by using the Chebyshev polynomials $t_n(x)=2\cos(n\arccos((x-2)/2))$ for the segment $[0,4]$. We now propose a generalization of the Schur-Siegel-Smyth trace problem:\\
{\em Find the sharp lower bound $\ell_m,\ 1\le m\le n-1,$  for the $\liminf$ in \eqref{2.12} among all sequences of polynomials $P_n\in\Z_n^s(\R_+,1)$.}\\
Clearly, we have that $\ell_1=\ell$. It is possible to obtain a precise relation between $\ell_m,\ m\ge 2,$ and $\ell$, but that argument falls outside the scope of the present paper. Instead, we show here that one can make an even stronger conclusion under the assumptions of Proposition \ref{prop2.8}.

\begin{theorem} \label{thm2.9}
Let $P_n(z) = a_n\prod_{k=1}^n (z-\alpha_{k,n})\in\Z_n^s(\R_+),\ n\in\N,$ be a sequence of polynomials, and let $[c-2,c+2]\subset\R,\ c\ge 2$. If $\dis\lim_{n\to\infty} \left(M(P_n)\right)^{1/n} = 1$ then
\begin{align*}
\liminf_{n\to\infty} \frac{|a_{n-m,n}|}{\binom{n}{m}} \ge c^m \ge 2^m, \quad m\in\N.
\end{align*}
\end{theorem}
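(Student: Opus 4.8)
The plan is to reduce the assertion to a statement about the normalized elementary symmetric functions $\sigma_m/\binom{n}{m}$ of the zeros, and then to extract that statement from the weak-$*$ equidistribution of the zero counting measures of $P_n$ to the equilibrium measure $\omega_K$ of $K:=[c-2,c+2]$ --- the potential-theoretic mechanism already behind Proposition~\ref{prop2.8}, see \cite{PrCrelle}. First I would carry out the reduction. Writing $P_n(z)=a_{n,n}\prod_{k=1}^{n}(z-\alpha_{k,n})$ with $\alpha_{k,n}\in\R_+$, every $\sigma_m=\sigma_m(\alpha_{1,n},\dots,\alpha_{n,n})$ is strictly positive, so \eqref{2.7} together with $a_{n,n}\in\Z\setminus\{0\}$ gives $|a_{n-m,n}|=|a_{n,n}|\,\sigma_m\ge\sigma_m$. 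Hence it suffices to prove $\liminf_{n\to\infty}\sigma_m/\binom{n}{m}\ge c^m$.

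For this I would truncate the zeros at the height $R:=c+2$. Put $\widetilde\alpha_{k,n}:=\min(\alpha_{k,n},R)$ and $\widetilde\sigma_m:=\sigma_m(\widetilde\alpha_{1,n},\dots,\widetilde\alpha_{n,n})$. Each term of $\sigma_m$ is a product of positive numbers, hence nondecreasing in every variable, so $\sigma_m\ge\widetilde\sigma_m$, while the truncated zeros all lie in the fixed interval $(0,R]$. For such a uniformly bounded array the power-sum expansion $\big(\sum_{k}\widetilde\alpha_{k,n}\big)^{m}=m!\,\widetilde\sigma_m+O_{m,R}(n^{m-1})$, in which the error collects all $m$-tuples of indices containing a repetition, combined with $\binom{n}{m}=\frac{n^{m}}{m!}(1+O(1/n))$, yields
\[
\frac{\widetilde\sigma_m}{\binom{n}{m}}=\Big(\frac1n\sum_{k=1}^{n}\widetilde\alpha_{k,n}\Big)^{m}+O_{m,R}\!\left(\frac1n\right).
\]
Now $\frac1n\sum_{k}\widetilde\alpha_{k,n}=\int\min(x,R)\,d\tau_n(x)$, and $\min(\cdot,R)$ is bounded and continuous on the one-point compactification $[0,\infty]$. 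Since $\lim_{n\to\infty}(M(P_n))^{1/n}=1$ forces $\tau_n\stackrel{*}{\rightarrow}\omega_K$, where $\omega_K$ is the equilibrium (arcsine) measure of $K$ and is symmetric about the midpoint $c$, and since $R=c+2$ makes $\min(x,R)=x$ on $K$, we obtain $\frac1n\sum_{k}\widetilde\alpha_{k,n}\to\int_K x\,d\omega_K(x)=c$. Therefore $\widetilde\sigma_m/\binom{n}{m}\to c^{m}$, and $\sigma_m\ge\widetilde\sigma_m$ gives $\liminf_{n\to\infty}\sigma_m/\binom{n}{m}\ge c^{m}\ge2^{m}$, which is the claim.

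The step I expect to be the real obstacle --- and the only point where genuinely more than Proposition~\ref{prop2.8} is required --- is the equidistribution $\tau_n\stackrel{*}{\rightarrow}\omega_K$. Proposition~\ref{prop2.8} supplies only a one-sided bound on the untruncated first moment $\frac1n\sum_{k}\alpha_{k,n}$, and such a bound does not survive the truncation at $c+2$: if the zeros merely concentrated on $K$ while clustering near the left endpoint $c-2$, the truncated means would tend to $c-2<c$ and the argument would collapse. What rules out such clustering is that, for polynomials in $\Z_n^{s}(\R_+)$ of asymptotically minimal generalized Mahler measure, the integrality constraints (via discriminants and root separation) push the counting measures all the way to the full equilibrium distribution of $K$, not merely onto $K$; this is exactly the equidistribution result of \cite{PrCrelle} on which Proposition~\ref{prop2.8} itself rests, and I would invoke it directly. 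The remaining ingredients --- the reduction $|a_{n-m,n}|\ge\sigma_m$ from \eqref{2.7}, the monotonicity of $\sigma_m$ under truncation, and the power-sum bookkeeping --- are routine. One should also note that the same computation shows $\lim_{n\to\infty}\sigma_m/\binom{n}{m}=c^{m}$ whenever the untruncated empirical means tend to $c$, so the $\liminf$ in the statement is present only to accommodate a possible mild escape to infinity of $o(n)$ of the zeros, which does not affect the lower bound.
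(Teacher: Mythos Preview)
Your argument is correct and rests on the same equidistribution input as the paper's proof --- Theorem~2.1 of \cite{PrCrelle}, giving $\tau_n\stackrel{*}{\rightarrow}\mu$ for the equilibrium (arcsine) measure $\mu$ of $[c-2,c+2]$ --- and both arrive at $c^m=\left(\int x\,d\mu(x)\right)^m$. The packaging differs: the paper writes $\sigma_m/n^m$ directly as $\int\cdots\int_{x_1<\cdots<x_m} x_1\cdots x_m\,d\tau_n(x_1)\cdots d\tau_n(x_m)$, passes to the limit in $\tau_n^{\otimes m}$, and then uses the symmetry of the integrand to convert the simplex integral to the cube integral $\left(\int x\,d\mu\right)^m$; you instead truncate the zeros at $R=c+2$, use the multinomial identity $\big(\sum_k\widetilde\alpha_{k,n}\big)^m=m!\,\widetilde\sigma_m+O(n^{m-1})$, and apply weak-$*$ convergence only to the single bounded continuous test function $\min(x,R)$. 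Your explicit truncation is a genuine gain in rigor: the paper's passage from $\tau_n^{\otimes m}$ to $\mu^{\otimes m}$ involves the unbounded integrand $x_1\cdots x_m$ on $\R_+^m$, and the inequality is justified only implicitly (via nonnegativity and a Fatou-type argument), whereas your route sidesteps the issue entirely by reducing to a one-dimensional bounded test function before invoking convergence.
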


It is also possible to prove similar results on the growth of coefficients for polynomials with integer coefficients and roots in sectors of the form $\{z\in\C:|\textup{Arg}\,z|\le \gamma\},$ where $\gamma<\pi/2.$

\section{Proofs}

\begin{proof}[Proof of Proposition \ref{prop1.1}]

Let $P_n(z):=a_n\prod_{j=1}^n (z-z_j)$. Recall that the resultant of
$R_m$ and $P_n$ is expressed as $R(P_n,R_m)=a_n^m \prod_{j=1}^n
R_m(z_j)$, see \cite[p. 22]{Pra}. It is clear that $R(P_n,R_m)\neq 0$ for $n>m,$
as $R_m(z_j)\neq 0,\ j=1,\ldots,n,$ by our assumption. Furthermore, this resultant is an integer,
because it has a determinant representation in terms of the coefficients of $R_m$
and $P_n$, cf. \cite[p. 21]{Pra}. Hence
\[
|a_n|^m\norm{R_m}_{[a,b]}^n \ge \left|a_n^m\prod_{j=1}^n R_m(z_j)\right| \ge 1, \quad n>m.
\]
Thus \eqref{1.9} follows by first raising the above inequality to the power $1/(mn)$, and then letting $n\to\infty$ and $m\to\infty$ in order.

\end{proof}

\begin{proof}[Proof of Theorem \ref{thm2.1}]

Let $P_n(x) = a_nx^n+\ldots\in\Z_n^s([-1,1])$ be an arbitrary sequence of polynomials. We use a sequence of polynomials $R_m$ with small sup norms on $[-1,1]$ in Proposition \ref{prop1.1} to prove the lower bound in \eqref{2.2}. In fact, these small polynomials have the following form
\[
R_m(x) = \prod_{i=1}^K Q_{m_i,i}^{[ s_i m]}(x), \quad m\in\N,
\]
where $Q_{m_i,i}(x) = b_{m_i} x^{m_i}+\ldots \in \Z_{m_i}$ are irreducible over $\Q$ polynomials, and  $0<s_i<1,\ i=1,\dots,K,$ with $\sum_{i=1}^K s_i m_i =1$. Such polynomials are used to obtain virtually all upper bounds for the integer Chebyshev constant, see \cite{PrIC}, and they were also used to obtain the upper bound in \eqref{1.8}, cf. \cite{Fla}. Since we deal with $[-1,1]$ instead of $[0,1]$, we need to apply the change of variable $t=x^2$ to translate the latter upper bound into
\[
t_{\Z}([-1,1]) \le \lim_{m\to\infty} \norm{R_m}_{[-1,1]}^{1/m} < \sqrt{0.42291334} < 0.65031788,
\]
see \cite{Tr71} and \cite{PrIC} for details. In fact, many of the irreducible factors $Q_{m_i,i}$ have all roots in $[-1,1]$, hence they may also occur as factors in the polynomials $P_n$. But this obstacle on our path to the application of Proposition \ref{prop1.1} may be easily removed. Note that we may only have at most $K$ irreducible common factors in the sequences $R_m$ and $P_n$, and that these factors may only occur in $P_n$ once, as each $P_n$ has simple roots. Thus we can drop all possible factors $Q_{m_i,i}$ in $P_n$, and obtain a new sequence of polynomials $\tilde P_n$ that have no common zeros with $R_m$ for any choice of $m$ and $n$. Furthermore, the leading coefficients of $\tilde P_n$ satisfy $ |a_n| / \prod_{i=1}^K |b_{m_i}| \le |\tilde a_n| \le |a_n|.$ Hence
\[
\liminf_{n\to\infty} |a_n|^{1/n} = \liminf_{n\to\infty} |\tilde a_n|^{1/n} \ge \lim_{m\to\infty} \norm{R_m}_{[-1,1]}^{-1/m} > 1.53770952,
\]
by Proposition \ref{prop1.1}, so that the lower bound in \eqref{2.2} is proved.

In order to obtain the upper bound in \eqref{2.2}, we need to exhibit a sequence of polynomials $P_n(x) = a_nx^n+\ldots\in\Z_n^s([-1,1])$ such that
\[
\liminf_{n\to\infty} |a_n|^{1/n} \le 1.54170092.
\]
The required sequence is given by the polynomials $G_n(x^2) = c_n x^{2n}+\ldots,\ n\in\N,$ where $G_n$ are the Gorshkov polynomials introduced in \cite{Gor59}. Further discussion of their properties may be found in \cite{Mo94}. In particular, Theorem 3 (due to Gorshkov) in \cite[p. 187]{Mo94} states that $\lim_{n\to\infty} |c_n|^{1/n} \approx 2.3768417062639.$ Our bound follows by taking the square root.

\end{proof}

\begin{proof}[Proof of Theorem \ref{thm2.2}]

Since $P_n$ is an irreducible polynomial of degree $n>m$, it cannot have common roots with any polynomial $R_m\in\Z_m$. In particular, we select $R_m$ as an integer Chebyshev polynomial of degree $m$, and apply Proposition \ref{prop1.1} to obtain that
\[
t_{\Z}([a,b]) \, \liminf_{n \to \infty} |a_n|^{1/n} = \limsup_{m \rightarrow \infty} \| R_m \|_{[a,b]}^{1/m} \, \liminf_{n \to \infty} |a_n|^{1/n} \ge 1.
\]
Hence the first inequality in \eqref{2.3} follows. The second inequality is a consequence of the upper estimate \eqref{1.7} for $t_{\Z}([a,b]).$

\end{proof}

\begin{proof}[Proof of Theorem \ref{thm2.3}]

We give a sketch of proof from \cite{PrCR} here. More details may be found in \cite{PrCrelle}. If $P_n(z)=a_n \prod_{k=1}^n (z-\alpha_{k,n})$ then the discriminant of $P_n$ is given by
\[
\Delta(P_n):=a_n^{2n-2} \prod_{1\le j<k\le n} (\alpha_{j,n}-\alpha_{k,n})^2.
\]
Observe that $\Delta(P_n)$ is an integer, as a symmetric form with integer coefficients in the zeros of $P_n$. Indeed, it may be written as a polynomial in the
elementary symmetric functions of $\alpha_{k,n}$, with integer
coefficients, by the fundamental theorem on symmetric forms. Since $P_n$ has simple roots, we have that $\Delta(P_n)\neq 0$ and $|\Delta(P_n)|\ge 1.$ Using the weak* compactness of the probability measures on $D$, we assume that $\tau_n \stackrel{*}{\rightarrow} \tau,$ where $\tau$ is a probability measure on $D$. Let $K_M(x,t) := \min\left(-\log{|x-t|},M\right).$ Since $\tau_n\times\tau_n \stackrel{*}{\rightarrow} \tau\times\tau,$ we obtain for the logarithmic energy of $\tau$ \cite{Ts75} that
\begin{align*}
I[\tau] &:=\iint \log\frac{1}{|x-t|}\,d\tau(x)\,d\tau(t) \\ &=
\lim_{M\to\infty} \left( \lim_{n\to\infty} \iint K_M(x,t)\,
d\tau_n(x)\,d\tau_n(t) \right) \\ &= \lim_{M\to\infty} \left(
\lim_{n\to\infty} \left( \frac{1}{n^2} \sum_{j\neq k}
K_M(\alpha_j,\alpha_k) +\frac{M}{n} \right) \right) \\ &\le
\lim_{M\to\infty} \left( \liminf_{n\to\infty} \frac{1}{n^2}
\sum_{j\neq k} \log\frac{1}{|\alpha_j-\alpha_k|} \right) \\ &=
\liminf_{n\to\infty} \frac{1}{n^2}
\log\frac{|a_n|^{2n-2}}{\Delta(P_n)} \le \liminf_{n\to\infty} \frac{1}{n^2}\log|a_n|^{2n-2} = 0.
\end{align*}
Thus $I[\tau]\le 0$. But $I[\nu]>0$ for any probability measure $\nu$ on $D$, except for $\mu_D$ \cite{Ts75}. Hence $\tau=\mu_D.$

It only remains to select the function $f(z)=z,\ z\in D$, and extend it continuously to $\C$ so that $f$ has compact support. The definition of the weak* convergence immediately gives
\[
\lim_{n\to\infty} A_n = \lim_{n\to\infty} \int z\,d\tau_n(z) = \int z\,d\mu_D(z) = 0.
\]

\end{proof}

\begin{proof}[Proof of Theorem \ref{thm2.5}]

Corollary 3.2 of \cite{PrCrelle} states that for any polynomial $P_n(z) = a_{n,n} \prod_{k=1}^n (z-\alpha_{k,n})\in\Z_n^s(D,M)$, we have
\[
\left|\frac{1}{n}\sum_{k=1}^n \alpha_{k,n}\right| \le 8 \sqrt{\frac{\log{n}}{n}},\quad n\ge \max(M,55).
\]
Since $|a_{n-1,n}|=|a_{n,n}\,\sigma_1|\le M |\sigma_1|$ by \eqref{2.7}, we obtain \eqref{2.8} as a combination of the above estimates.

\end{proof}

\begin{proof}[Proof of Theorem \ref{thm2.6}]

Since the zeros of $P_n$ come is complex conjugate pairs, we note that each $s_m$ is real, and we have that
\[
\frac{s_m}{n} = \int z^m\,d\tau_n(z) = \int \Re(z^m)\,d\tau_n(z).
\]
We state Theorem 3.1 of \cite{PrCrelle} for convenience. Let $\phi:\C\to\R$ satisfy $|\phi(z)-\phi(t)|\le A|z-t|,\ z,t\in\C,$ and $\supp(\phi)\subset\{z:|z|\le R\}.$ If $P_n(z) = a_{n,n}\prod_{k=1}^n (z-\alpha_{k,n})$ is a polynomial with integer coefficients and simple zeros, then
\begin{align} \label{3.1}
\left|\frac{1}{n}\sum_{k=1}^n \phi(\alpha_{k,n}) - \int\phi\,d\mu_D\right| \le A(2R+1) \sqrt{\frac{\log\max(n,M(P_n))}{n}}
\end{align}
for all $n\ge 55,$ where $M(P_n)=|a_{n,n}| \prod_{k=1}^n \max(|\alpha_{k,n}|,1)$ is the Mahler measure of $P_n.$ Note that $M(P_n)=|a_{n,n}|\le M$ in our case. If we let
\[
\phi(z):=\left\{
           \begin{array}{ll}
             \Re(z^m), & |z|\le 1, \\
             \Re(z^m)(m+1-m|z|), & 1\le |z|\le 1+1/m, \\
             0, & |z|\ge 1+1/m,
           \end{array}
         \right.
\]
then
\[
\frac{s_m}{n} = \int \phi(z)\,d\tau_n(z) \quad \mbox{and} \quad \int\phi\,d\mu_D = 0.
\]
An elementary computation shows that the partial derivatives $\phi_x$ and $\phi_y$ exist on $\C\setminus S,$ where $S:=\{z:|z|=1 \mbox{ or } |z|=1+1/m\},$ and they are given by
\begin{align*}
\phi_x(z)=\left\{
           \begin{array}{ll}
             \Re(mz^{m-1}), & |z|<1, \\
             \Re(mz^{m-1})(m+1-m|z|) - \dis\frac{mx\, \Re(z^m)}{\sqrt{x^2+y^2}} , & 1<|z|<1+\frac{1}{m}, \\
             0, & |z|>1+\frac{1}{m},
           \end{array}
         \right.
\\ \\
\phi_y(z)=\left\{
           \begin{array}{ll}
             \Re(miz^{m-1}), & |z|<1, \\
             \Re(miz^{m-1})(m+1-m|z|) - \dis\frac{my\, \Re(z^m)}{\sqrt{x^2+y^2}} , & 1<|z|<1+\frac{1}{m}, \\
             0, & |z|>1+\frac{1}{m},
           \end{array}
         \right.
\end{align*}
where we let $z=x+iy.$ Furthermore, we obtain that $|\phi_x(z)|\le 2me$ and $|\phi_y(z)|\le 2me$ for $z=x+iy\in\C\setminus S.$ The Mean Value Theorem now gives
\[
|\phi(z)-\phi(t)|\le |z-t|\, \sup_{\C\setminus S} \sqrt{\phi_x^2+\phi_y^2} \le 2me\sqrt{2}\, |z-t| < 8m\, |z-t|.
\]
Hence we use \eqref{3.1} with $A=8m$ and $R=1+1/m$ to obtain \eqref{2.10}.

\end{proof}

\begin{proof}[Proof of Theorem \ref{thm2.7}]

The recurrence relation \eqref{2.9} immediately implies the estimate
\[
|\sigma_m| \le \frac{1}{m} \sum_{j=1}^m |s_j| |\sigma_{m-j}|.
\]
Hence \eqref{2.11} is obtained from \eqref{2.10} by a standard inductive argument in $m\in\N.$

\end{proof}

\begin{proof}[Proof of Proposition \ref{prop2.8}]

Theorem 2.1 of \cite{PrCrelle} guarantees that the counting measures $\tau_n$ in the zeros of $P_n$ converge to the equilibrium measure $\mu$ of $[c-2,c+2]$ in the weak* topology. It is known \cite{Ts75} that
\begin{align} \label{3.2}
d\mu(x) = \frac{dx}{\pi\sqrt{4-(x-c)^2}},\quad x\in(c-2,c+2).
\end{align}
Thus we have that
\[
\liminf_{n\to\infty} \frac{1}{n} \sum_{k=1}^n \alpha_{k,n} = \liminf_{n\to\infty} \int x\,d\tau_n(x) \ge \int_{c-2}^{c+2} \frac{x\,dx}{\pi\sqrt{4-(x-c)^2)}} = c.
\]

\end{proof}

\begin{proof}[Proof of Theorem \ref{thm2.9}]

It follows from \eqref{2.7} that
\begin{align*}
&\liminf_{n\to\infty} \frac{|a_{n-m,n}|}{\binom{n}{m}} \ge m!\, \liminf_{n\to\infty} \frac{\sigma_m}{n^m} \\ &= m!\, \liminf_{n\to\infty} \int\ldots\int_{x_1<\ldots<x_m} x_1 \ldots x_m\,d\tau_n(x_1)\ldots d\tau_n(x_m).
\end{align*}
We again use Theorem 2.1 of \cite{PrCrelle} to conclude that $\tau_n \stackrel{*}{\rightarrow} \mu$, where $\mu$ is defined by \eqref{3.2}. Hence
\begin{align*}
\liminf_{n\to\infty} \frac{|a_{n-m,n}|}{\binom{n}{m}} \ge m! \int\ldots\int_{x_1\le \ldots\le x_m} x_1 \ldots x_m\,d\mu(x_1)\ldots d\mu(x_m).
\end{align*}
The latter integral is evaluated by using the symmetry of the integrand in the variables $x_1, \ldots, x_m,$ and by passing from the integral over a simplex to an integral over a cube, which gives
\begin{align*}
&m! \int\ldots\int_{c-2\le x_1\le \ldots\le x_m\le c+2} x_1 \ldots x_m\,d\mu(x_1)\ldots d\mu(x_m) \\ &= \int_{c-2}^{c+2}\ldots\int_{c-2}^{c+2} x_1 \ldots x_m\,d\mu(x_1)\ldots d\mu(x_m) \\ &= \left(\int_{c-2}^{c+2} x\,d\mu(x)\right)^m = c^m.
\end{align*}

\end{proof}

\bigskip

CONTACT INFORMATION

\medskip
I. E. Pritsker\\
Department of Mathematics, Oklahoma State University, Stillwater, OK 74078,   U.S.A.\\
igor@math.okstate.edu
\end{document}